\numberwithin{equation}{section}
\newcommand{\qed}{\hfill \ensuremath{\Box}}
\def\XXint#1#2#3{{\setbox0=\hbox{$#1{#2#3}{\int}$}
\vcenter{\hbox{$#2#3$}}\kern-.5\wd0}}
\newcommand{\ve}{\varepsilon}
\newcommand{\ddt}[1]{\frac{\partial #1}{\partial t}}
\begin{document}
\newcounter{remark}
\newcounter{theor}
\setcounter{remark}{0} \setcounter{theor}{1}
\newtheorem{claim}{Claim}
\newtheorem{theorem}{Theorem}[section]
\newtheorem{proposition}{Proposition}[section]
\newtheorem{lemma}{Lemma}[section]
\newtheorem{definition}{Definition}[section]
\newtheorem{conjecture}{Conjecture}[section]
\newtheorem{corollary}{Corollary}[section]
\newenvironment{proof}[1][Proof]{\begin{trivlist}
\item[\hskip \labelsep {\bfseries #1}]}{\end{trivlist}}
\newenvironment{remark}[1][Remark]{\addtocounter{remark}{1} \begin{trivlist}
\item[\hskip \labelsep {\bfseries #1
\thesection.\theremark}]}{\end{trivlist}}
\newenvironment{example}[1][Example]{\addtocounter{remark}{1} \begin{trivlist}
\item[\hskip \labelsep {\bfseries #1
\thesection.\theremark}]}{\end{trivlist}}
~

\begin{center}
{\large \bf
A remark on constant scalar curvature K\"ahler metrics on minimal models
\footnote{Jian, W. is supported in part by China Scholarship Council. Shi, Y. is supported in part by NSFC No.11331001 and the Hwa Ying Foundation at Nanjing University. Song, J. is supported in part by National Science Foundation grant DMS-1711439. }}
\bigskip\bigskip

{Wangjian Jian$^*$, Yalong Shi$^{**}$ and Jian Song$^\dagger$} \\

\bigskip

\end{center}

\begin{abstract}

{\footnotesize In this short note, we prove the existence of constant scalar curvature K\"ahler metrics on compact K\"ahler manifolds with semi-ample canonical bundles. }

\end{abstract}

\section{Introduction}

The existence of constant scalar curvature K\"ahler metrics (cscK) is a fundamental problem in K\"ahler geometry and has been extensively studied. It is proved by Tian \cite{T2} that the existence of  a K\"ahler-Einstein metric on a  Fano manifold without holomorphic vector fields is equivalent to the properness of the Mabuchi K-energy and he further conjectured that such equivalence should also hold for the cscK metrics. In other words, the existence of cscK metrics can be formulated as a variational problem. Recently, this conjecture was proved by Chen-Cheng \cite{CC}. In this paper, we will construct cscK metrics on minimal models in a sufficiently small neighborhood of the canonical classes with an additional assumption on the semi-ampleness of the canonical bundle. Such a construction is related to the minimal model program and in the non-general type case, it can be viewed as the collapsing phenomena  of cscK metrics on the minimal model to the unique twisted K\"ahler-Einstein metric on canonical model constructed by Song-Tian \cite{ST}. Our proof makes use of the existence result of Chen-Cheng \cite{CC} and the criterion for the properness of the Mabuchi K-energy developed by Weinkove \cite{W}, Song-Weinkove \cite{SW} and Li-Shi-Yao \cite{LSY}. The following is our main result. 
%
%
\begin{theorem}\label{main1}
Let $X$ be a compact K\"ahler manifold. If the canonical bundle $K_X$ is semi-ample,  then for any K\"ahler class $[\omega]$ on $X$, there exists $\delta_{X, [\omega]}>0$ such that for any $0<\delta<\delta_{X, [\omega]}$, there exists a unique cscK metric in the K\"ahler class $ [K_X]+\delta[\omega]$.
\end{theorem}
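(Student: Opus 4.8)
\emph{Reduction to coercivity of the Mabuchi energy.} The plan is to deduce the theorem from the Chen--Cheng solution of Tian's properness conjecture. By \cite{CC}, a cscK metric exists in a K\"ahler class $\alpha$ on $X$ if and only if the Mabuchi K-energy is coercive on $\alpha$ modulo the action of $\mathrm{Aut}_0(X)$ on K\"ahler potentials, and by Berman--Berndtsson cscK metrics in $\alpha$ are unique up to $\mathrm{Aut}_0(X)$. I would first deal with the automorphisms. Semi-ampleness of $K_X$ gives, for $m\gg 1$, a base-point-free system $|mK_X|$ and the associated morphism $\phi\colon X\to\Xc\subset\mathbb{P}^N$ onto the canonical model, with $m\,c_1(K_X)=\phi^*c_1(\OO(1))$ and $K_{\Xc}$ ample. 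Any holomorphic vector field on $X$ descends to one on $\Xc$, which vanishes since $K_{\Xc}>0$; hence it is tangent to the fibres of $\phi$ and, the smooth fibres being Calabi--Yau, it generates a torus action on each of them. Thus $\mathrm{Aut}_0(X)$ is a compact complex torus (in particular acting trivially on $H^{1,1}(X,\mathbb{R})$), so by Calabi's theorem $\mathrm{Isom}_0(X,\omega)=\mathrm{Aut}_0(X)$ for any cscK metric $\omega$; hence a cscK metric in $[K_X]+\de[\omega]$ is $\mathrm{Aut}_0(X)$-invariant, is therefore unique, and ``coercive modulo $\mathrm{Aut}_0(X)$'' is equivalent to ``coercive'' (the $\mathrm{Aut}_0(X)$-orbit of $0$ being compact, hence bounded). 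So it remains to prove that the Mabuchi energy $\mathcal{M}_\de$ on $[K_X]+\de[\omega]$ is coercive for all small $\de>0$.

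\emph{The functional and the criterion.} Let $\chi:=\tfrac1m\phi^*\ofs\in c_1(K_X)$, a fixed smooth semipositive form which degenerates exactly along the fibres of $\phi$ (so $\chi^{\ka+1}\equiv 0$, $\ka=\kod X=\dim\Xc$). For $\de>0$ let $\omega_\de$ be the unique (Aubin--Yau) K\"ahler metric in $[K_X]+\de[\omega]$ solving $\ric(\omega_\de)=-\omega_\de+\de\omega$; as $\de\downarrow 0$ these collapse along the fibres of $\phi$, with $\phi_*\omega_\de$ converging to the twisted K\"ahler--Einstein metric of Song--Tian \cite{ST} on $\Xcr$. A direct expansion of Chen's decomposition of the K-energy with reference $\omega_\de$, using $\bar S_\de=-\,n\,c_1(K_X)\!\cdot\!([K_X]+\de[\omega])^{n-1}\big/\,([K_X]+\de[\omega])^{n}$, makes the leading Monge--Amp\`ere energy terms cancel and yields
\[
\mathcal{M}_\de(\varphi)=\mathcal{H}_{\omega_\de}(\varphi)+(I-J)_{\omega_\de}(\varphi)-n\de\bigl(E_{\omega}(\varphi)-\be_\de\,E(\varphi)\bigr),
\qquad
\be_\de=\frac{[\omega]\!\cdot\!([K_X]+\de[\omega])^{n-1}}{([K_X]+\de[\omega])^{n}},
\]
with $\mathcal{H}_{\omega_\de}\ge 0$ the entropy and $(I-J)_{\omega_\de}\ge\tfrac1{n+1}J_{\omega_\de}\ge 0$. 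Coercivity of $\mathcal{M}_\de$ thus reduces to controlling the twisted term $n\de(E_\omega-\be_\de E)$, which is exactly what the criteria of Weinkove \cite{W}, Song--Weinkove \cite{SW} and Li--Shi--Yao \cite{LSY} are designed for: they bound twisted energies of the type $E_\omega$, $E_\chi$ by the entropy plus a controlled multiple of $J_{\omega_\de}$, under a positivity hypothesis on a combination of $[K_X]+\de[\omega]$, $c_1(K_X)$ and $\bar S_\de$. I would verify these hypotheses for small $\de>0$ using that $\bar S_\de\to-\ka$ and $\de\be_\de\to(n-\ka)/n$ stay bounded, and that $\chi$, being pulled back from the $\ka$-dimensional base $\Xc$, contributes only at subleading order (after an integration by parts).

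\emph{General type and the main obstacle.} When $\ka=n$ the class $[K_X]$ is ample, $\omega_\de$ converges smoothly to the negative K\"ahler--Einstein metric $\omega_0>0$, so $\de\be_\de\to 0$ and $\omega\le C\omega_\de$ with $C$ independent of $\de$; then the twisted term is $o(1)\,J_{\omega_\de}(\varphi)$ and $\mathcal{M}_\de(\varphi)\ge\tfrac1{2(n+1)}J_{\omega_\de}(\varphi)$ for small $\de$ --- equivalently, one may just invoke openness of coercivity around the K\"ahler--Einstein class. The main obstacle is the non-general-type case $\ka<n$: the reference class $[K_X]$ then lies on the boundary of the K\"ahler cone, the metrics $\omega_\de$ collapse at rate $\de$ along the fibres of $\phi$, the factor $\de\be_\de$ does not go to $0$, and the naive pointwise bound $\chi\le C\omega_\de$ --- uniform in $\de$ away from the discriminant of $\phi$, but \emph{failing near it}, where the Weil--Petersson form of the Song--Tian metric and the collapsing rate degenerate --- is too lossy. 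The crux, and the point where semi-ampleness is genuinely used, is to establish the twisted bound \emph{uniformly in $\de$}: to control $\int_X(-\varphi)\,\chi\wedge\omega_\de^{\,j}\wedge\omega_{\de,\varphi}^{\,n-1-j}$ and $\int_X(-\varphi)\,\omega\wedge\omega_\de^{\,j}\wedge\omega_{\de,\varphi}^{\,n-1-j}$ with $\de$-independent constants, combining the pluripotential estimates for the collapsing family $\omega_\de$ (comparison with $\phi^*\omega_{\Xc}$ \`a la \cite{ST}) away from the discriminant with the absorption of the logarithmic singularities of the Ricci potential $h_\de=\log(\omega_\de^{\,n}/\Omega)$ along the discriminant into the entropy, via $\int_X(-h_\de)\,\omega_{\de,\varphi}^{\,n}\le V_\de\,\mathcal{H}_{\omega_\de}(\varphi)+\int_X e^{-h_\de}\,\omega_\de^{\,n}$ and a uniform $L^1$-bound on $e^{-h_\de}$ coming from the klt (algebraic) nature of these singularities. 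Granting this, $\mathcal{M}_\de$ is coercive for $0<\de<\de_{X,[\omega]}$, and together with the uniqueness from the first step this proves the theorem. \qed
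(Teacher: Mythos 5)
Your reduction to coercivity via Chen--Cheng and Berman--Berndtsson matches the paper's starting point, but from there your plan diverges and, as written, contains a genuine gap: the whole theorem is made to rest on the ``crux'' estimate --- a bound on $\int_X(-\varphi)\,\chi\wedge\omega_\de^{\,j}\wedge\omega_{\de,\varphi}^{\,n-1-j}$ that is \emph{uniform in $\de$} for the collapsing family of twisted K\"ahler--Einstein references $\omega_\de$ --- which you do not prove and only sketch via pluripotential comparisons with the Song--Tian metric near the discriminant. That step is not a routine verification: uniform control of the collapsing family near the singular fibres is essentially the content of the paper's open Conjecture 1.1, so your plan reduces the theorem to something at least as hard as what it is meant to prove. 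Moreover, no uniformity in $\de$ is actually needed: for each fixed small $\de$ one only has to verify properness in the single class $[K_X]+\de[\omega]$, and the constants in the properness inequality are allowed to blow up as $\de\to 0$.

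The idea you are missing is that the Song--Weinkove/Li--Shi--Yao criterion can be checked purely cohomologically and pointwise, with no analysis of $\omega_\de$ at all. The paper takes $\eta=\frac1m\Phi^*\ofs\in[K_X]$ (smooth, nonnegative, with $\eta^l=0$ for $l>\ka$) and $\chi_\de=\eta+\de\omega$, computes $c_\de=\ka/n+O(\de)$, and expands
\[
\bigl((nc_\de+\ve)\chi_\de-(n-1)\eta\bigr)\wedge\chi_\de^{n-2}
=(nc_\de+\ve)(\de\omega)^{n-1}+\sum_{i=1}^{\ka}A_i\,\eta^i\wedge(\de\omega)^{n-1-i},
\qquad
A_i=\tfrac{(n-1)!}{i!\,(n-1-i)!}\bigl(\ka+\ve-i+O(\de)\bigr),
\]
which is pointwise positive for small $\de$ because each $\eta^i\wedge\omega^{n-1-i}\ge0$ and $\ka+\ve-i\ge\ve>0$ for $i\le\ka$. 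Here $\ve=\alpha_X([K_X]+[\omega])>0$ is a uniform lower bound for the $\alpha$-invariants $\alpha_X([K_X]+\de[\omega])$ (Lemma 2.2 of the paper, the second place where semi-ampleness is used); without this strictly positive $\ve$ the top coefficient $A_\ka$ would be only $O(\de)$ and of indeterminate sign. Song--Weinkove then gives lower boundedness of the relevant twisted $J$-functional, Li--Shi--Yao upgrades this to properness of the K-energy, and Chen--Cheng concludes. If you want to salvage your plan, replace the collapsing analysis by this pointwise cone condition; your discussion of automorphisms and of the general-type case is fine but peripheral.
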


A line bundle $L$ is said to be semi-ample if a sufficiently large power of $L$ is base point free.  Theorem \ref{main1} still holds if $K_X$ is nef and  the numerical dimension of $K_X$ is $n-1$. We believe that in general, the semi-ample assumption for $K_X$  should be replaced by the nef condition. Indeed, the abundance conjecture predicts that nef $K_X$ implies semi-ampleness  for projective manifolds. Any projective manifold with nef canonical bundle is also called a minimal model. Suppose $X$ is a projective minimal model, then the pluricanonical systems of $X$ induce a unique fibration 
$$\Phi: X \rightarrow X_{can}$$
over the unique canonical model $X_{can}$ of $X$ by the finite generation of the canonical ring. The Kodaira dimension of $X$, $\textnormal{Kod}(X)$, is then defined to be the complex dimension of the normal projective variety $X_{can}$. When $X$ is of general type,  i.e. when $\textnormal{Kod}(X)=\dim X$, there exists a unique geometric K\"ahler-Einstein metric $g_{can}$ on $X_{can}$ (\cite{Ts, EGZ, So}) and Theorem \ref{main1} generalizes a result of Arezzo-Pacard \cite{AP} on the cscK metrics on minimal surfaces of general type. 
In \cite{ST}, Song-Tian constructed the unique twisted K\"ahler-Einstein metric $g_{can}$ on $X_{can}$ satisfying
\begin{equation}
Ric(g_{can}) = - g_{can} + g_{WP},
\end{equation}
where $g_{WP}$ is the Weil-Petersson metric induced from the variation of the fibres. We propose the following conjecture relating  $g_{can}$ to the cscK metrics constructed in Theorem \ref{main1}. 
\begin{conjecture} \label{conj1} Let $X$ be an $n$-dimensional K\"ahler manifold with semi-ample (or nef) canonical bundle $K_X$. Then for any K\"ahler class $[\omega]$ and any sequence $\epsilon_j \rightarrow 0$, the cscK metrics $g_j \in [K_X]+ \epsilon_j [\omega]$ converge to the twisted K\"ahler-Einstein metric $g_{can}$ on the canonical model $X_{can}$ of $X$. 
\end{conjecture}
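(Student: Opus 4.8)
The plan is to view the cscK equation in the class $[K_X]+\epsilon_j[\omega]$ as the degenerating and collapsing problem studied for the K\"ahler-Ricci flow by Song-Tian \cite{ST}, and to adapt the second-order a priori estimates developed there to the fourth-order cscK setting. First I would fix a smooth semi-positive representative $\chi=\Phi^*\omega_{can}\in[K_X]$ pulled back from the canonical model, together with the reference K\"ahler form $\omega$, and write the cscK metric as $\omega_j=\chi+\epsilon_j\omega+\ddbar\varphi_j$. Following Chen-Cheng \cite{CC}, I would decouple the fourth-order cscK equation into the coupled second-order system consisting of a complex Monge-Amp\`ere equation $\omega_j^n=e^{F_j}\,\Omega$ for a fixed smooth volume form $\Omega$ together with the linear equation relating $F_j$ to the scalar curvature, so that the problem is reduced to obtaining uniform estimates on the pair $(\varphi_j,F_j)$. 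A preliminary cohomological computation shows that the average scalar curvatures satisfy $\bar S_j\to -\kod(X)$ as $\epsilon_j\to0$, which fixes the normalization of the limiting twisted K\"ahler-Einstein equation and is consistent with the collapsing picture in which the base contributes $-\kod(X)$ and the fibers the Weil-Petersson correction.

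The heart of the argument is to establish a priori estimates that are \emph{uniform} in $\epsilon_j$. On the pluripotential side I would use degenerate Monge-Amp\`ere theory to obtain a uniform $C^0$ bound for $\varphi_j$ and for the normalized volume ratio on the regular locus $\Xcr$ over which $\Phi$ is a smooth Calabi-Yau fibration. The delicate point is that the Chen-Cheng estimates, in particular the entropy and $C^0$ bounds on $F_j$, carry constants that depend on a uniform K\"ahler lower bound for the class, and these degenerate as $\epsilon_j\to0$; the main new analytic input must therefore be a uniform control of $F_j$, equivalently of the Ricci potential, that survives the collapse. Granting this, I would combine the generalized second-order estimate of Song-Tian with local Calabi and Schauder estimates to obtain $C^\infty_{loc}$ convergence of $\omega_j$ on $\Xcr$, together with the expected collapsing of the fibers at rate $\epsilon_j$ in the spirit of Gross-Tosatti-Zhang.

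Once uniform estimates are in hand, I would pass to the limit by integrating the Monge-Amp\`ere equation along the fibers of $\Phi$. Fiber integration of the collapsing volume form produces, exactly as in \cite{ST}, the relative Calabi-Yau measure on each fiber and the Weil-Petersson term $g_{WP}$ on the base, so the limit of $\omega_j$ descends to a closed positive current $\omega_\infty$ on $X_{can}$ solving the twisted Monge-Amp\`ere equation whose Ricci form satisfies $\ric(\omega_\infty)=-\omega_\infty+g_{WP}$ on $\Xcr$. Since the twisted K\"ahler-Einstein metric $g_{can}$ of \cite{ST} is the \emph{unique} solution of this equation, the limit is forced to be $g_{can}$, and the convergence statement follows. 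The main obstacle throughout is the fourth-order nature of the cscK equation: unlike the parabolic K\"ahler-Ricci flow, there is no maximum principle directly controlling the scalar curvature, so the uniform bound on $F_j$ in the degenerating class is the decisive and most difficult step, and controlling the behavior near the singular fibers over the discriminant locus of $\Phi$ is the accompanying technical difficulty.
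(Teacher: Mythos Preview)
The statement you are trying to prove is labeled \textbf{Conjecture} in the paper, not Theorem: the authors explicitly present it as an open problem (``We propose the following conjecture\ldots'') and give no proof. The only remark they make is that the special case in which $X_{can}$ is smooth and $\Phi:X\to X_{can}$ has no singular fibre follows from Fine's adiabatic construction \cite{F} together with the twisted K\"ahler-Einstein metric of \cite{ST}. There is therefore no ``paper's own proof'' to compare your proposal against.

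As for the proposal itself, you have correctly identified the natural strategy and, just as correctly, the reason it is not yet a proof. Your outline hinges on obtaining $C^0$ (and then higher) bounds on $F_j$ that are \emph{uniform as $\epsilon_j\to 0$}, and you openly say that the Chen-Cheng estimates degenerate in this regime and that ``the main new analytic input must therefore be a uniform control of $F_j$\ldots that survives the collapse.'' But you do not supply that input: the phrase ``granting this'' is precisely the missing step. For the K\"ahler-Ricci flow one has a parabolic maximum principle that yields scalar curvature bounds (cf.\ \cite{ST2}); for the fourth-order cscK system no such mechanism is available, and producing a substitute is the content of the conjecture, not a technicality. Likewise, controlling behavior over the discriminant locus of $\Phi$ is flagged but not addressed. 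In short, your write-up is a faithful description of why Conjecture~\ref{conj1} is plausible and what a proof would have to accomplish, but it does not contain the decisive idea, which is consistent with the fact that the paper leaves the statement open.
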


The convergence should be both global in Gromov-Hausdorff topology and local in smooth topology away from the singular fibres of $\Phi: X \rightarrow X_{can}$.
 When the canonical model $X_{can}$ is smooth and the morphism $\Phi: X \rightarrow X_{can}$ has no singular fibre, Conjecture \ref{main1} holds from the result of Fine \cite{F} (Theorem 8.1 and its proof) and the construction of the twisted K\"ahler-Einstein metric on $X_{can}$ \cite{ST}. Theorem \ref{main1} and Conjecture \ref{conj1} are also related to the result of Gross-Wilson \cite{GW} from the perspective of the SYZ conjecture. Theorem \ref{main1} and Conjecture \ref{conj1} can be also interpreted by the slope stability introduced by Ross-Thomas \cite{RT}.  We also propose a related conjecture for the long time solutions of the K\"ahler-Ricci flow. 

\begin{conjecture} \label{conj:KRF}Let $X$ be an $n$-dimensional K\"ahler manifold with nef canonical bundle $K_X$ and positive Kodaira dimension. Then for any initial K\"ahler metric $g_0$, the solution $g(t)$ of the normalized K\"ahler-Ricci flow $$\ddt{g} = - Ric(g) - g, ~ g(0)=g_0$$ converges in Gromov-Hausdorff topology to $g_{can}$ and the scalar curvature $R(t)$ converges to  $-\textnormal{Kod}(X)$ in $C^0(X)$, where $\textnormal{Kod}(X)$ is the Kodaira dimension of $X$.  
\end{conjecture}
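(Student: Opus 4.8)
The plan is to carry out the analytic minimal model program of Song--Tian \cite{ST} and to upgrade its known weak convergence to pointwise control of the scalar curvature and to Gromov--Hausdorff convergence. I would work under the semi-ampleness of $K_X$ (automatic for projective $X$ if the abundance conjecture holds), so that the pluricanonical fibration $\Phi: X\to\Xc$ of the introduction is available and $[K_X]=\Phi^*[\chi]$ for a Kähler class $[\chi]$ on $\Xc$, with $\kappa:=\textnormal{Kod}(X)=\dim\Xc$. First, since $K_X$ is nef, long-time existence is known (Tsuji \cite{Ts}, Tian--Zhang): the normalized flow $\partial_t g=-\textnormal{Ric}(g)-g$ has a smooth solution for all $t\in[0,\infty)$, and on cohomology $[\omega(t)]=e^{-t}[\omega_0]+(1-e^{-t})[K_X]\to[K_X]$. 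As the limit class is pulled back from the $\kappa$-dimensional base, the flow necessarily collapses the fibres of $\Phi$ when $\kappa<n$ (the case $\kappa=n$ being the non-collapsing general-type case, where convergence to the Kähler--Einstein metric and $R\to -n$ are classical).

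Second, I would reduce the flow to a scalar parabolic complex Monge--Amp\`ere equation. Writing $\omega(t)=\chi_t+\ddbar\varphi$, where $\chi_t$ is a reference form in $[\omega(t)]$ assembled from $\Phi^*\omega_{can}$ and the fibrewise Ricci-flat volume forms, the potential $\varphi$ satisfies a parabolic equation whose uniform $C^0$ estimate and whose limit as a positive current were established in \cite{ST}; the limit is $\Phi^*\omega_{can}$, with $g_{can}$ the twisted Kähler--Einstein metric $\textnormal{Ric}(g_{can})=-g_{can}+g_{WP}$. On the regular locus $\Xr=\Phi^{-1}(\Xcr)$, where $\Phi$ is a submersion with smooth Calabi--Yau fibres, I would then prove higher-order parabolic estimates (Schauder and Evans--Krylov, adapted to the collapsing direction) to upgrade this to $C^\infty_{\mathrm{loc}}$ convergence of $g(t)$ to $g_{can}$ on the base, with fibres shrinking at rate $e^{-t}$ and becoming Ricci-flat.

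Third, for the scalar curvature I would first note that the averaged scalar curvature $\bar R(t)=\dfrac{n\,c_1(X)\cdot[\omega(t)]^{n-1}}{[\omega(t)]^n}$ tends to $-\kappa$: since $[K_X]^{\kappa+1}=0$ while $[K_X]^{\kappa}\cdot[\omega_0]^{n-\kappa}\neq 0$, the leading terms of numerator and denominator as $t\to\infty$ are both of order $e^{-(n-\kappa)t}$ and their ratio is exactly $-\kappa$. Pointwise on $\Xr$, the $C^\infty_{\mathrm{loc}}$ convergence gives $R(g(t))\to -\kappa$ on compact subsets, the geometric mechanism being that the vertical Ricci curvature vanishes in the limit while the horizontal Ricci curvature of the total space equals $\textnormal{Ric}(g_{can})-g_{WP}=-g_{can}$ --- the Weil--Petersson term cancels precisely because it records the variation of the fibre volume --- so that its trace against $g_{can}$ is $-\kappa$. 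The crux is then to globalize this to uniform $C^0$ convergence on $X$ by controlling $R$ near the singular fibres: the evolution $\partial_t R=\Delta R+|\textnormal{Ric}|^2+R$ together with $|\textnormal{Ric}|^2\geq R^2/n$ yields a uniform lower bound for $R$ by the maximum principle, but the matching upper bound --- equivalently, the absence of curvature concentration as the singular fibres collapse --- is the essential difficulty.

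Finally, for the Gromov--Hausdorff statement I would establish a uniform diameter bound for $(X,g(t))$ and show that the distance functions converge to the metric completion $(\Xc,d_{can})$ of $(\Xcr,g_{can})$, combining the local smooth convergence on $\Xr$ with measure and volume control near the singular locus. I expect the last two steps --- the uniform two-sided scalar-curvature bound and the global Gromov--Hausdorff convergence in the presence of singular fibres --- to be the main obstacles, and indeed this is exactly why the statement is posed as a conjecture: away from the singular fibres everything follows from the collapsing theory of \cite{ST} and the twisted Kähler--Einstein structure, whereas near the singular locus neither the curvature behaviour nor the collapsing geometry is understood in general.
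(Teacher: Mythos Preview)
The statement you are addressing is a \emph{conjecture} in the paper, not a theorem; the paper offers no proof and merely remarks that a special case (when $K_X$ is semi-ample and $\Phi:X\to X_{can}$ has no singular fibres) is treated in \cite{J}, and that the conjecture would strengthen the scalar-curvature bounds of \cite{ST2}. So there is no proof in the paper to compare your proposal against.

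That said, your outline is a reasonable and well-informed research plan, and you are candid about where it breaks down. The parts you describe as essentially known---long-time existence, the cohomological computation $\bar R(t)\to-\kappa$, the reduction to a parabolic Monge--Amp\`ere equation, current convergence to $\Phi^*\omega_{can}$, and local smooth convergence on the regular part $X^\circ$---are indeed available in the literature you cite, and together they recover precisely the special case the paper attributes to \cite{J}. The two steps you flag as the ``main obstacles''---a uniform \emph{upper} bound on $R(t)$ near the singular fibres (the lower bound being elementary from the evolution equation, as you note) and global Gromov--Hausdorff convergence in the presence of singular fibres---are exactly the missing ingredients, and your proposal does not supply them. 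In particular, the maximum-principle argument you sketch only gives one side; the matching upper bound would require ruling out curvature concentration along the collapsing singular fibres, for which no mechanism is proposed. So your write-up is an accurate diagnosis of the problem rather than a proof, which is consistent with the paper's decision to state this as a conjecture.
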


A special case of Conjecture \ref{conj:KRF} is recently proved by the first author when $K_X$ is semi-ample and $\Phi: X \rightarrow X_{can}$ has no singular fibres \cite{J}. Conjecture \ref{conj:KRF} will strengthen the result of Song-Tian \cite{ST2} on bounds of the scalar curvature along the K\"ahler-Ricci flow. 

In Theorem \ref{main1}, the manifold $X$ admits a fibration of Calabi-Yau manifolds. The following theorem treats fibrations of K\"ahler manifolds of negative first Chern class. 
\begin{theorem} \label{main2} Let $X$ be an $n$-dimensional K\"ahler manifold. If $K_X$ is ample and $X$ admits a holomorphic fibrtation $\Phi: X \rightarrow Y$ over a Riemann surface $Y$, then for any ample line bundle $L$ over $Y$,  there exists a unique cscK  metric in the K\"ahler class  $[L] + \delta [K_X]$ for any sufficiently small $\delta>0$.

\end{theorem}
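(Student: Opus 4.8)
\noindent\emph{Proof strategy.}
The plan is to deduce existence from the variational criterion of Chen--Cheng \cite{CC} and to establish the required properness of the Mabuchi K-energy by an adiabatic (collapsing) analysis along the fibration $\Phi\colon X\to Y$, using the properness criteria of Weinkove \cite{W}, Song--Weinkove \cite{SW} and Li--Shi--Yao \cite{LSY}. Write $\alpha_\delta:=\Phi^*[L]+\delta[K_X]$. Since $K_X$ is ample, $X$ is canonically polarized, so $H^0(X,T_X)=0$; in particular $X$ has no nonzero holomorphic vector fields and $\mathrm{Aut}^0(X)$ is trivial, so a cscK metric in any K\"ahler class is unique if it exists, and by \cite{CC} a cscK metric in $\alpha_\delta$ exists if and only if the Mabuchi K-energy $\mathcal M_{\alpha_\delta}$ is coercive (proper) on $\alpha_\delta$. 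It therefore suffices to prove that $\mathcal M_{\alpha_\delta}$ is proper for all sufficiently small $\delta>0$.

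I would first record the relevant asymptotics and build the reference metric. As $\Phi^*[L]$ is pulled back from the Riemann surface $Y$, $(\Phi^*[L])^2=0$, whence $\alpha_\delta^n=n\delta^{n-1}\,\Phi^*[L]\cdot[K_X]^{n-1}+\delta^n[K_X]^n$ and the average scalar curvature satisfies $\underline S_{\alpha_\delta}=-(n-1)\delta^{-1}+O(1)$ as $\delta\to0$. Each smooth fibre $X_y=\Phi^{-1}(y)$ satisfies $K_{X_y}=K_X|_{X_y}$ by adjunction (since $[X_y]=\Phi^*[\mathrm{pt}]$ restricts trivially to $X_y$), hence is canonically polarized and carries a unique K\"ahler--Einstein metric; in the manner of Song--Tian \cite{ST}, these assemble, away from the finitely many singular fibres, into a closed semi-positive $(1,1)$-current $\rho$ on $X$ that is smooth on the smooth locus and restricts there to the fibrewise KE metric. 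Choosing a suitable K\"ahler metric $\omega_Y$ on $Y$ in $[L]$ --- selected so that the resulting metric is as close to cscK as possible, which on the curve $Y$ amounts to a solvable twisted (Kazdan--Warner type) scalar equation incorporating the Weil--Petersson data of the family --- one forms the adiabatic family $\omega_\delta:=\Phi^*\omega_Y+\delta\rho\in\alpha_\delta$ (after an innocuous adjustment absorbing $\Phi^*[K_Y]$, admissible since $[L]+\delta[K_Y]$ stays ample on $Y$ for small $\delta$); it collapses the fibres as $\delta\to0$.

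The core of the argument is to verify the criterion of \cite{W,SW,LSY} for $\omega_\delta$ uniformly as $\delta\to0$, i.e.\ to control the sign of the relevant combination of $\underline S_{\alpha_\delta}\,\omega_\delta$ and $\mathrm{Ric}(\omega_\delta)$. Along the fibres, $\mathrm{Ric}(\omega_\delta|_{X_y})=-\delta^{-1}\omega_\delta|_{X_y}$ to leading order, which exactly balances the fibre part of $\tfrac{1}{n-1}\underline S_{\alpha_\delta}\,\omega_\delta$, so the combination is \emph{degenerate} in the fibre directions and the needed positivity must be extracted at the next order of the adiabatic expansion (from the $\delta\rho$ correction and the second fundamental form of the fibres). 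In the base directions one uses the ampleness of $[L]$ together with the Song--Tian-type identity for $\mathrm{Ric}$ in base directions, which involves $\omega_Y$ and a semi-positive Weil--Petersson form $\omega_{\mathrm{WP}}$. It is worth noting that the \emph{naive} pointwise cone inequality $\underline S\,\omega-(n-1)\mathrm{Ric}(\omega)>0$ cannot hold for any metric in $\alpha_\delta$: its cohomology class restricted to a fibre $X_y$ equals $\bigl(\delta[K_X]^n+n\,\Phi^*[L]\cdot[K_X]^{n-1}\bigr)^{-1}\bigl(-\delta[K_X]^n\bigr)\,[K_{X_y}]$, a negative multiple of the ample class $[K_{X_y}]$. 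Hence one must invoke the degenerate (twisted) form of the criterion from \cite{LSY} and, crucially, retain the entropy term of the K-energy --- which is genuinely needed here rather than merely bounded below by zero --- to absorb the collapsing of the fibres.

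The step I expect to be the main obstacle is precisely this last one: proving coercivity of $\mathcal M_{\alpha_\delta}$ \emph{uniformly in $\delta$} in the collapsing limit. The degeneracy in the fibre directions forces a precise control of the error terms in the adiabatic expansion, in particular of their behaviour near the singular fibres of $\Phi$ and the discriminant in $Y$, and this is where the refined criterion of \cite{LSY} together with an entropy estimate enter. Once uniform properness is established for small $\delta$, existence of a cscK metric in $\alpha_\delta=[L]+\delta[K_X]$ follows from \cite{CC}, and uniqueness follows from the absence of holomorphic vector fields; this completes the proof.
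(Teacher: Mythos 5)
Your general framework (properness of the Mabuchi K-energy via the cone conditions of \cite{W,SW,LSY}, then existence via \cite{CC}) is the right one, but your proposal goes wrong at the claimed cohomological obstruction, and the adiabatic programme you substitute for the direct argument is never actually carried out. The inequality you test, $\underline{S}\,\omega-(n-1)\mathrm{Ric}(\omega)>0$, is the \emph{negative} of the relevant Song--Weinkove combination: since $-\mathrm{Ric}$ represents $[K_X]$ and $\underline{S}=-nc_\delta$ (up to the standard $2\pi$ normalization) with $c_\delta=[K_X]\cdot[\chi_\delta]^{n-1}/[\chi_\delta]^n>0$, condition (\ref{sw}) asks for positivity of $\bigl(nc_\delta\chi'-(n-1)\eta\bigr)\wedge\chi'^{n-2}$ with $\eta\in[K_X]$ K\"ahler, i.e.\ of $\bigl((n-1)\mathrm{Ric}-\underline{S}\chi'\bigr)\wedge\chi'^{n-2}$ up to a positive factor. (Sanity check: for the K\"ahler--Einstein metric in $[K_X]$ one has $\underline{S}\omega-(n-1)\mathrm{Ric}(\omega)=-\omega<0$, yet the K-energy is certainly proper there.) With the correct sign, your own fibre computation shows that the class of $nc_\delta\chi_\delta-(n-1)\eta$ restricted to a fibre $X_y$ is $\bigl(nc_\delta\delta-(n-1)\bigr)[K_{X_y}]$, where $nc_\delta\delta-(n-1)$ equals \emph{minus} the coefficient you wrote down, hence is a small \emph{positive} multiple of the ample class $[K_{X_y}]$. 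There is no obstruction, no need for a ``degenerate twisted criterion'', and no need to retain the entropy term.

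The paper's proof is exactly the direct verification you set aside: take $\beta=\Phi^*\omega_Y$ a semi-positive representative of $\Phi^*[L]$ (so $\beta^2=0$), any K\"ahler form $\eta\in[K_X]$ (available since $K_X$ is ample), and $\chi_\delta=\beta+\delta\eta$. Then $nc_\delta=\tfrac{n-1}{\delta}+O(1)$, so $nc_\delta\chi_\delta-(n-1)\eta=nc_\delta\,\beta+\bigl(nc_\delta\delta-(n-1)\bigr)\eta$ is a positive multiple of the nonnegative form $\beta$ plus a positive $O(\delta)$ multiple of the K\"ahler form $\eta$, hence K\"ahler, and (\ref{sw}) follows immediately upon wedging with $\chi_\delta^{n-2}$; existence then comes from Lemma \ref{criterion} with $\ve=0$ and Theorem \ref{ChCh}, and uniqueness from \cite{BB}. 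No fibrewise K\"ahler--Einstein metrics, Weil--Petersson forms, or uniformity in $\delta$ are required: Chen--Cheng only needs properness for each fixed $\delta$, with constants allowed to depend on $\delta$. By contrast, the step you yourself identify as the main obstacle --- uniform coercivity in the collapsing limit with control near the singular fibres --- is left unproved, so even on its own terms your proposal is a research programme rather than a proof of the theorem.
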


 Theorem \ref{main2} generalizes some of the  results of Fine in \cite{F} since a special example in Theorem \ref{main2} is a smooth fibration of smooth high genus curves over another high genus curve. It can be applied to more general fibrations of canonical models of general type with possible singular fibres and low genus base curve. For example, any Fermat surface $x_0^d+x_1^d+x_2^d+x_3^d=0, $ of degree $d\geq 5$ in $\mathbb{P}^3$ has ample canonical bundle, and the projection from one of its straight lines extends to a fibration of the surface onto $\mathbb{P}^1$. Theorem \ref{main2} proves the existence of cscK metrics on these Fermat surfaces far away from their canonical classes.
 
As the K\"ahler classes degenerate to a K\"ahler class on the base $Y$, the cscK metrics in Theorem \ref{main2} should converge to a twisted  constant scalar curvature metric on $Y$ as shown in \cite{F, F2} in the case when the fibration map $\Phi$ is submersion.


\section{Proof }

The well-known Aubin-Yau functionals are given by 

$$I_{\chi}(\varphi)=\int_X\;\varphi \left(\frac {\chi^n}{n!}-\frac {\chi_{\varphi}^n}{n!} \right),  ~J_{\chi}(\varphi)=\int_0^1\int_X\;
{\varphi} \left(\frac {\chi^n}{n!}-\frac {\chi_{t\varphi}^n}{n!} \right) dt,$$
where $\chi_\varphi= \chi+\sqrt{-1}\partial\overline{\partial}\varphi$ and $\varphi \in PSH(X, \chi)=\{ \varphi\in C^\infty(X)~|~ \chi_\varphi>0\}$. In fact, the three functionals $I_\chi$, $J_\chi$ and $I_\chi-J_\chi$ are all equivalent in the sense that they are all positive and they can bounded each other with uniform constants. A modified $J$-functional $\hat J_{\eta, \chi}$ associated to a closed (1,1)-form $\eta$ is defined as below and is related to the $J$-equation proposed by Donaldson in \cite{D} 
$$\hat J_{\eta, \chi}:=\int_0^1\,\int_X\;{\varphi}
(\eta\wedge \chi_{t\varphi}^{n-1}-c\chi_{t\varphi}^n)\frac
{dt}{(n-1)!},$$
where $c=\frac{[\eta]\cdot[\chi]^{n-1}}{[\chi]^n}$.  

The cscK metrics are critical points of Mabuchi K-energy function and a useful observation of Chen \cite{C} relates the Mabuchi K-energy to the Aubin-Yau functionals as follows
\begin{equation}\label{Ken}
K_\chi(\varphi)=\int_X\;\log \frac
{\chi_{\varphi}^n}{\chi^n}\,\frac{\chi_{\varphi}^n}{n!}+\hat J_{-Ric(\chi),
\chi}(\varphi). \end{equation}

The Mabuchi K-energy  is said to be proper as defined in \cite{T2} if there are constants $c_1, c_2>0$ such that 
$$K_\chi(\varphi)\geq c_1 I_\chi(\varphi)-c_2$$
for all $\varphi \in PSH(X,\chi)$. Tian \cite{T2} conjectured that the properness of Mabuchi K-energy implies the existence of cscK metrics and the conjecture is recently proved by Chen-Cheng \cite{CC}.
\begin{theorem}[Theorem 4.1 of \cite{CC}]\label{ChCh}
If the Mabuchi K-energy $K_\chi$ is proper on $PSH(X,\chi)$,  there exists a unique cscK metric  in $[\chi]$.
\end{theorem}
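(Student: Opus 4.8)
The statement is Chen and Cheng's theorem, so I would reconstruct the strategy of \cite{CC}: realize the cscK condition as a coupled second-order system, solve it by a continuity method, and use the properness hypothesis to furnish the a~priori estimates that force closedness. Writing $\chi_\varphi = \chi + \sqrt{-1}\partial\dbar\varphi$ and introducing $F = \log(\chi_\varphi^n/\chi^n)$, and recalling $Ric(\chi_\varphi) = Ric(\chi) - \sqrt{-1}\partial\dbar F$, the equation $R(\chi_\varphi) = \underline{R}$ for the topologically fixed average $\underline{R} = n[Ric(\chi)]\cdot[\chi]^{n-1}/[\chi]^n$ becomes equivalent to the pair
$$\chi_\varphi^n = e^F\chi^n, \qquad \Delta_{\chi_\varphi}F = \textrm{tr}_{\chi_\varphi}Ric(\chi) - \underline{R}.$$
The first equation is of complex Monge-Amp\`ere type and the second is linear elliptic in $F$; solving the cscK problem amounts to producing a smooth pair $(\varphi, F)$ satisfying both.

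I would deform this system through a one-parameter family joining a trivially solvable equation at $t=0$ to the cscK system at $t=1$, for instance Chen's path $t(R(\chi_\varphi) - \underline{R}) = (1-t)(\textrm{tr}_{\chi_\varphi}\chi - n)$, whose $t=0$ member $\textrm{tr}_{\chi_\varphi}\chi = n$ is solved by $\chi$ itself. Let $T \subseteq [0,1]$ be the set of parameters admitting a smooth solution. Openness of $T$ would follow from the implicit function theorem: the linearization of the scalar-curvature map is, to leading order, the Lichnerowicz operator $\mathcal{D}^*\mathcal{D}$, whose kernel consists of functions whose gradient is a holomorphic vector field. Properness of $K_\chi$ precludes nonzero such fields — a nontrivial automorphism flow would keep the K-energy bounded while driving $I_\chi$ to infinity — so $\mathcal{D}^*\mathcal{D}$ is invertible modulo constants and openness holds.

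The core of the proof, and the step I expect to be the main obstacle, is closedness, i.e.\ uniform a~priori estimates along the path in terms of the energy alone. The decisive input is that along the path the relevant energy functional stays bounded above, so properness, $K_\chi(\varphi) \ge c_1 I_\chi(\varphi) - c_2$, keeps $I_\chi(\varphi)$ bounded; combined with the identity \eqref{Ken} and the control of $\hat J_{-Ric(\chi), \chi}$ by $I_\chi$, this yields a uniform bound on the entropy $\int_X \log(\chi_\varphi^n/\chi^n)\,\chi_\varphi^n$. From this entropy bound one must extract, in order, a uniform $C^0$ bound on $\varphi$ and an $L^\infty$ bound on $F$ (by pluripotential estimates and Moser iteration), then the genuinely hard second-order estimate bounding $\textrm{tr}_\chi \chi_\varphi$, which for this fourth-order system demands a delicate maximum-principle argument controlling $F$ and its gradient simultaneously, and finally higher regularity through the Evans-Krylov and Schauder theories once uniform ellipticity is in hand. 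Producing the $C^2$ and $W^{2,p}$ estimates from no more than an entropy bound is the deep analytic heart of \cite{CC}.

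With uniform estimates in place, $T$ is closed, hence $T = [0,1]$ and a cscK metric exists in $[\chi]$. For uniqueness I would invoke the convexity of $K_\chi$ along weak geodesics in the space of K\"ahler potentials: convexity makes critical points unique modulo the connected automorphism group, and since properness already excludes nonzero holomorphic vector fields that group is trivial, so the cscK metric in $[\chi]$ is genuinely unique.
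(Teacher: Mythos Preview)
The paper does not prove this statement at all: Theorem~\ref{ChCh} is quoted as Theorem~4.1 of \cite{CC} and used as a black box, with uniqueness attributed separately to Berman--Berndtsson \cite{BB} at the end of the proof of Theorem~\ref{main1}. Your proposal is therefore not comparable to any argument in the paper --- you are sketching the content of the cited reference rather than something the present paper supplies.

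That said, your sketch is a faithful outline of the Chen--Cheng strategy: the reformulation as the coupled $(\varphi,F)$ system, the continuity path, openness via invertibility of the Lichnerowicz operator (with properness ruling out holomorphic vector fields), and closedness via the chain entropy $\Rightarrow$ $C^0$ and $\sup|F|$ $\Rightarrow$ second-order estimate $\Rightarrow$ higher regularity. Two small points of alignment with the paper's conventions: first, the uniqueness you derive from geodesic convexity is exactly the Berman--Berndtsson result \cite{BB}, which the paper invokes by name rather than folding into \cite{CC}; second, your observation that properness forces the connected automorphism group to be trivial is what justifies the unqualified ``unique'' in the theorem statement, and the paper implicitly relies on this as well.
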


The most quantitive criterion for the properness of Mabuchi K-energy is the cone condition established by Song-Weinkove \cite{SW} as the necessary and sufficient condition for the solvability of the $J$-equation. More precisely, if the canonical bundle $K_X$ is ample, the Mabuchi K-energy $K_\chi$ is proper if there exist K\"ahler forms $\chi'\in [\chi]$ and $\eta \in [K_X]$ such that
\begin{equation}\label{sw}
\left( \frac{n [K_X]\cdot [\chi]^{n-1}}{ [\chi]^n} \chi' - (n-1) \eta \right) \wedge (\chi')^{n-2} >0
\end{equation}
as an $(n-1, n-1)$-form. The condition (\ref{sw}) is further strengthened and generalized by Li-Shi-Yao \cite{LSY} by making use of the $\alpha$-invariant. 
Let us recall Tian's $\alpha$-invariant introduced in \cite{T1}. 
\begin{definition}
Let  $(X, \chi)$ be a K\"ahler manifold. The $\alpha$-invariant is defined as
$$\alpha_X([\chi])= \sup\{\alpha>0~|~\exists C_\alpha>0  \ \text{such that}\int_X e^{-\alpha(\varphi - \sup_X \varphi)} \chi^n \le C_\alpha, \, \forall \varphi\in PSH(X,\chi)\}.$$
\end{definition}
It is obvious that the  $\alpha_X([\chi])$ does not depend on the choice $\chi' \in [\chi]$.

The following properness criterion is essentially contained in the proof to Theorem 1.1 in \cite{LSY} (see  Remark 3.1).
\begin{lemma}\label{criterion}
Let $(X,\chi)$ be a compact K\"ahler manifold of dimension $n$.
If there exist $0\leq \ve<\frac{n+1}{n}\alpha_X([\chi])$, a K\"ahler form $\chi' \in [\chi]$ and a form $\eta\in  [K_X] $ such that $\eta+\ve\chi'>0$ and 
 $$
 \Big( (nc+\ve)\chi'-(n-1)\eta \Big)\wedge \chi'^{n-2}  >0, ~c=\frac{ [K_X]\cdot [\chi]^{n-1}}{[\chi]^n}$$
as an $(n-1,n-1)$ form,
then the Mabuchi $K$-energy $K_\chi$ is proper on $PSH(X, \chi).$
\end{lemma}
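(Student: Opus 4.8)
The plan is to substitute the hypothesis into Chen's identity (\ref{Ken}) and reduce the properness of $K_\chi$ to two known inputs: the cone criterion of Song--Weinkove \cite{SW}, applied not to $[K_X]$ but to the twisted K\"ahler class $[K_X]+\ve[\chi]$, together with a lower bound on the entropy term governed by Tian's $\al$-invariant. Since $K_\chi$, $I_\chi$, $J_\chi$ and every $\hat J_{\cdot,\chi}$ are unchanged when $\varphi$ is shifted by a constant (all the cohomological integrals they involve are fixed), I normalize $\sup_X\varphi=0$ throughout; below, all constants are independent of $\varphi\in PSH(X,\chi)$ and $O(1)$ denotes a quantity bounded uniformly in such $\varphi$.

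First I rewrite (\ref{Ken}). As $-Ric(\chi)$ and $\eta$ represent the same class $[K_X]$, we may write $-Ric(\chi)-\eta=\sqrt{-1}\,\partial\dbar h$ with $h\in C^\infty(X)$; since $\int_0^1\frac{d}{dt}\chi_{t\varphi}^n\,dt=\chiphi^n-\chi^n$, two integrations by parts give $\hat J_{-Ric(\chi),\chi}(\varphi)-\hat J_{\eta,\chi}(\varphi)=\frac1{n!}\int_X h\,(\chiphi^n-\chi^n)=O(1)$. Next set $\theta:=\eta+\ve\chi'$, which is a K\"ahler form by hypothesis, lies in $[K_X]+\ve[\chi]$, and has associated constant $c_\theta:=\frac{[\theta]\cdot[\chi]^{n-1}}{[\chi]^n}=c+\ve$. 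Since $c_\theta-\ve=c$, the definition of $\hat J$ gives directly
$$\hat J_{\eta,\chi}(\varphi)=\hat J_{\theta,\chi}(\varphi)-\ve\,\hat J_{\chi',\chi}(\varphi),$$
and hence
$$K_\chi(\varphi)=\int_X\log\frac{\chiphi^n}{\chi^n}\,\frac{\chiphi^n}{n!}+\hat J_{\theta,\chi}(\varphi)-\ve\,\hat J_{\chi',\chi}(\varphi)+O(1).$$

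Now I estimate the three terms. For $\hat J_{\theta,\chi}$: expanding $\theta=\eta+\ve\chi'$ shows $\big(nc_\theta\chi'-(n-1)\theta\big)\wedge\chi'^{n-2}=\big((nc+\ve)\chi'-(n-1)\eta\big)\wedge\chi'^{n-2}>0$ by hypothesis, so $\chi'\in[\chi]$ witnesses the Song--Weinkove cone condition for the pair $(\theta,\chi)$; by \cite{SW} the corresponding $J$-equation is solvable, and consequently $\hat J_{\theta,\chi}$ is bounded below on $PSH(X,\chi)$. For $\hat J_{\chi',\chi}$: as $\chi'-\chi$ is $\sqrt{-1}\,\partial\dbar$-exact, the same computation as above gives $\hat J_{\chi',\chi}(\varphi)=\hat J_{\chi,\chi}(\varphi)+O(1)$, and since $\hat J_{\chi,\chi}$ equals the Aubin functional $I_\chi-J_\chi$ (a standard computation), the Aubin inequality $I_\chi-J_\chi\leq\frac{n}{n+1}I_\chi$ yields $\hat J_{\chi',\chi}(\varphi)\leq\frac{n}{n+1}I_\chi(\varphi)+O(1)$. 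For the entropy: fix $\al<\al_X([\chi])$, so that $\int_X e^{-\al\varphi}\chi^n\leq C_\al$; applying the Gibbs inequality $\int g\,d\mu\leq\log\int e^g\,d\nu+\int\log\frac{d\mu}{d\nu}\,d\mu$ to the probability measures $d\mu=\frac{\chiphi^n}{n!V}$ and $d\nu=\frac{\chi^n}{n!V}$ (with $V=\int_X\frac{\chi^n}{n!}$) and the bounded function $g=-\al\varphi$ gives $\int_X\log\frac{\chiphi^n}{\chi^n}\frac{\chiphi^n}{n!}\geq-\al\int_X\varphi\,\frac{\chiphi^n}{n!}-O(1)$; since $\sup_X\varphi=0$ forces $\int_X\varphi\,\frac{\chi^n}{n!}\leq0$, we have $-\int_X\varphi\,\frac{\chiphi^n}{n!}\geq I_\chi(\varphi)$, and therefore $\int_X\log\frac{\chiphi^n}{\chi^n}\frac{\chiphi^n}{n!}\geq\al\,I_\chi(\varphi)-O(1)$.

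Feeding these three estimates into the displayed formula for $K_\chi$ yields
$$K_\chi(\varphi)\ \geq\ \Big(\al-\frac{n}{n+1}\ve\Big)I_\chi(\varphi)-O(1).$$
The hypothesis $\ve<\frac{n+1}{n}\al_X([\chi])$ is exactly the inequality $\frac{n}{n+1}\ve<\al_X([\chi])$, so I may choose $\al$ with $\frac{n}{n+1}\ve<\al<\al_X([\chi])$; then $c_1:=\al-\frac{n}{n+1}\ve>0$, so $K_\chi(\varphi)\geq c_1 I_\chi(\varphi)-c_2$ and $K_\chi$ is proper on $PSH(X,\chi)$. (For $\ve=0$ this uses only $\al_X([\chi])>0$, which always holds.) The one genuinely substantive step is the observation that the hypothesis is precisely the Song--Weinkove cone condition for the twisted class $[K_X]+\ve[\chi]$, combined with the fact that the constant $\frac{n}{n+1}$ appearing in the entropy and Aubin bounds is exactly what makes the admissible range of $\ve$ come out as $\ve<\frac{n+1}{n}\al_X([\chi])$; the cohomological bookkeeping, the identity $\hat J_{\chi,\chi}=I_\chi-J_\chi$, and the Aubin inequalities are all routine.
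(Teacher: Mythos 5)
Your proof is correct and follows essentially the same route as the paper's: both decompose the K-energy via Chen's formula, bound the entropy term from below by a multiple of $I_\chi$ (equivalently $I_\chi-J_\chi$) using the $\alpha$-invariant and Jensen's inequality, use linearity of $\hat J$ together with its invariance (up to $O(1)$) under cohomologous replacement to identify the remaining functional with $\hat J_{\eta+\ve\chi',\chi}$, and invoke Song--Weinkove's cone condition for the twisted K\"ahler class $[K_X]+\ve[\chi]$ to bound it below. You have simply written out in full the steps the paper sketches, with the same constants emerging in the same way.
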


\begin{proof} We include a sketch of the proof for the lemma. By Jensen's inequality, there exists $C>0$ such that 
$$\int_X\;\log \left(\frac
{\chi_{\varphi}^n}{\chi^n} \right)\,\frac {\chi_{\varphi}^n}{n!}\geq \ve'(I_{\chi}(\varphi)
-J_{\chi}(\varphi))-C,$$
for any $\ve<\ve'<\frac{n+1}{n}\alpha_X([\chi])$ and $\varphi \in PSH(X, \chi)$.
On the other hand, straightforward calculations show that 
$$\ve(I_\chi-J_\chi)+\hat J_{-Ric(\chi),\chi}=\hat J_{\ve\chi-Ric(\chi), \chi}. $$ Since $\ve\chi-Ric(\chi)$ and $\ve\chi'+\eta$ are in the same cohomology class, we have
$$|\hat J_{\ve\chi-Ric(\chi), \chi}-\hat J_{\eta+\ve\chi', \chi}|\leq C$$ for some uniform constant $C$.
By the main theorem of Song-Weinkove \cite{SW}, our condition in the lemma implies $\hat J_{\eta+\ve\chi', \chi}$ has a critical point, so is bounded from below.  These together imply the properness of $K_\chi$ from the formula (\ref{Ken}) and the definition of properness.

\qed
\end{proof}

We will need the following simple observation for the $\alpha$-invariant to apply Lemma \ref{criterion}. 
\begin{lemma}\label{lower} Suppose the canonical bundle $K_X$ is semi-ample. 
Then for any K\"ahler class $[\omega]$,   we have
$$\alpha_X( [K_X]+\delta[\omega])\geq \alpha_X( [K_X]+[\omega])>0$$
for any $0<\delta<1.$
\end{lemma}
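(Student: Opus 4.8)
The plan is to reduce the statement $\alpha_X([K_X]+\delta[\omega]) \geq \alpha_X([K_X]+[\omega])$ to a monotonicity property of the $\alpha$-invariant with respect to the K\"ahler class, exploiting the fact that $K_X$ is semi-ample and hence $[K_X]$ is represented by a smooth semi-positive (in fact degenerate) closed $(1,1)$-form coming from a pullback. Fix $0 < \delta < 1$ and write $\chi_\delta = \chi + \delta\,\omega$, where $\chi$ is a fixed smooth semi-positive representative of $[K_X]$ (obtained by pulling back a multiple of a K\"ahler form on the canonical model under $\Phi$, using semi-ampleness) and $\omega$ is K\"ahler. Then $\chi_\delta > 0$ for every $\delta > 0$, and $\chi_1 = \chi + \omega \geq \chi_\delta + (1-\delta)\omega \geq \delta\,\omega$ pointwise. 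The key observation is the inclusion of potential spaces: if $\varphi \in PSH(X,\chi_\delta)$, then since $\chi_1 - \chi_\delta = (1-\delta)\omega$ is a fixed K\"ahler form (independent of $\varphi$), we have $\chi_{1} + \sqrt{-1}\partial\dbar\varphi = \chi_\delta + \sqrt{-1}\partial\dbar\varphi + (1-\delta)\omega > 0$, so $\varphi \in PSH(X,\chi_1)$ as well; that is, $PSH(X,\chi_\delta) \subseteq PSH(X,\chi_1)$.

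With this inclusion in hand, the argument proceeds by comparing the two defining integrals. Suppose $\alpha < \alpha_X([K_X]+[\omega])$, so there is $C_\alpha > 0$ with $\int_X e^{-\alpha(\varphi - \sup_X\varphi)}\,\chi_1^n \leq C_\alpha$ for all $\varphi \in PSH(X,\chi_1)$. For $\varphi \in PSH(X,\chi_\delta) \subseteq PSH(X,\chi_1)$ I would bound $\chi_\delta^n$ against $\chi_1^n$. Since $0 < \chi_\delta \leq \chi_1$ as $(1,1)$-forms (because $\chi_1 - \chi_\delta = (1-\delta)\omega \geq 0$), we get $\chi_\delta^n \leq \chi_1^n$ pointwise as volume forms, hence
$$\int_X e^{-\alpha(\varphi - \sup_X\varphi)}\,\chi_\delta^n \;\leq\; \int_X e^{-\alpha(\varphi - \sup_X\varphi)}\,\chi_1^n \;\leq\; C_\alpha$$
for all $\varphi \in PSH(X,\chi_\delta)$. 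Taking the supremum over admissible $\alpha$ yields $\alpha_X([K_X]+\delta[\omega]) \geq \alpha_X([K_X]+[\omega])$. The final strict positivity $\alpha_X([K_X]+[\omega]) > 0$ follows because $[K_X]+[\omega]$ is a K\"ahler class on a compact K\"ahler manifold, and every such class has strictly positive $\alpha$-invariant by Tian's theorem (the uniform Skoda-type $L^1$ estimate for $\chi_1$-plurisubharmonic functions, or equivalently H\"ormander's $L^2$/Skoda's lemma applied to a smooth K\"ahler representative).

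The only subtle point is that $\chi$ is merely semi-positive and not K\"ahler, so $PSH(X,\chi)$ and the integral $\int_X e^{-\alpha(\varphi-\sup\varphi)}\chi^n$ are degenerate; this is exactly why the statement is phrased only for $\delta > 0$ (where $\chi_\delta$ is genuinely K\"ahler) and why one must route everything through the fixed K\"ahler form $\chi_1$ rather than through $\chi$ itself. I expect the main obstacle — really the main thing to get right — is checking the potential-space inclusion $PSH(X,\chi_\delta)\subseteq PSH(X,\chi_1)$ cleanly and confirming that the pointwise domination $\chi_\delta \le \chi_1$ genuinely gives $\chi_\delta^n \le \chi_1^n$ as measures, which is immediate here since both are positive $(1,1)$-forms of the same type and one dominates the other (so their $n$-th wedge powers are ordered). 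Everything else is a direct comparison of integrals, so the lemma is essentially a monotonicity statement: enlarging the K\"ahler class by a semi-positive amount cannot decrease the $\alpha$-invariant in this setting.
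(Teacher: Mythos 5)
Your proposal is correct and takes essentially the same route as the paper: the key step in both is that semi-ampleness gives a smooth nonnegative representative $\eta\in[K_X]$, whence $PSH(X,\eta+\delta\omega)\subset PSH(X,\eta+\omega)$, and the bound on the $\alpha$-invariant follows from the definition. You merely make explicit the measure comparison $\chi_\delta^n\le\chi_1^n$ and the representative-independence that the paper leaves implicit in the phrase ``by definition.''
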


\begin{proof}
Choose any K\"ahler form $\omega' \in [\omega]$. Since $K_X$ is semi-ample, we can find a smooth nonnegative closed $(1,1)$-form $\eta\in [K_X]$ and so  $\eta+ t\omega$ is K\"ahler for all $t>0$. Immediately we have for any $0<\delta <1$
$$PSH(X,\eta+\delta\omega)\subset PSH(X,\eta+\omega).$$
By definition, this implies $\alpha_X([K_X]+\delta[\omega])\geq  \alpha_X([K_X]+[\omega])>0$. 

\qed
\end{proof}

Now we can complete the proof of Theorem \ref{main1}. 
\begin{proof}[Proof of Theorem \ref{main1}] Denote by $\kappa$ the Kodaira dimension of $X$. If $\kappa=0$,  by semi-ampleness assumption, a multiple of $K_X$ is trivial, and Theorem 1.1 immediately follows from Yau's solution \cite{Y}  to the Calabi conjecture.  We will then assume $0<\kappa\leq n$. Since $K_X$ is semi-ample, the pluricanonical system $|mK_X|$ maps $X$ into a projective space $\mathbb{P}^{N_m}$ for some sufficiently large $m$.  We can choose   a smooth closed $(1,1)$-form $\eta$ on $X$ to be the pullback of $m^{-1} \omega_{FS}$  on $\mathbb{P}^{N_m}$. Obviously $\eta \in [K_X] $ is nonnegative and $\eta^l=0$ if and only if $l>\kappa$. Let $\omega$ be a K\"ahler form on $X$ as in the assumption of the theorem. We define
$$\chi_\delta = \eta + \delta \omega, ~~ \ve=\alpha_X([K_X]+[\omega])$$
and
$$c_\delta= \frac{[K_X]\cdot [\chi_\delta]^{n-1}}{[\chi_\delta]^n}.$$
Then we have 
\begin{eqnarray*}
c_\delta &=& \frac{\sum_{i=0}^{\kappa-1}\binom{n-1}{i}([K_X])^{i+1}\cdot (\delta[\omega])^{n-1-i}}{\sum_{i=0}^\kappa \binom{n}{i}([K_X])^i\cdot (\delta[\omega])^{n-i}}\\
&=& \frac{\binom{n-1}{\kappa-1}}{\binom{n}{\kappa}}+O(\delta)=\frac{\kappa}{n}+O(\delta).
\end{eqnarray*}
Straightforward calculations show that
\begin{eqnarray*}
& & (nc_\delta+\ve)\chi_\delta^{n-1}-(n-1)\eta\wedge \chi_\delta^{n-2}\\
&=&(nc_\delta+\ve)(\eta + \delta \omega)^{n-1}-(n-1)\eta\wedge (\eta+\delta\omega)^{n-2}\\
 &=& (nc_\delta+\epsilon)\sum_{i=0}^\kappa \binom{n-1}{i} \eta^i\wedge (\delta\omega)^{n-1-i}-(n-1)\sum_{i=0}^{\kappa-1} \binom{n-2}{i} \eta^{i+1}\wedge (\delta\omega)^{n-2-i}\\
 &=& (nc_\delta+\epsilon)(\delta\omega)^{n-1}+\sum_{i=1}^\kappa A_i \eta^i\wedge (\delta\omega)^{n-1-i}
\end{eqnarray*}
with
$$A_i= (nc_\delta+\epsilon)\binom{n-1}{i}-(n-1)\binom{n-2}{i-1}.$$
Furthermore, 
\begin{eqnarray*}
A_i &=& (\kappa+\ve+O(\delta))\cdot\frac{(n-1)!}{i!(n-1-i)!}-\frac{(n-1)!}{(i-1)!(n-1-i)!}\\
&=& \frac{(n-1)!}{i!(n-1-i)!}\big(\kappa+\epsilon-i+O(\delta)\big)\\
&\geq & \frac{(n-1)!}{i!(n-1-i)!}\big(\epsilon+O(\delta)\big).
\end{eqnarray*}
Each $A_i$ is positive if we choose $\delta$ to be sufficiently small.  The smooth $(n-1, n-1)$ form $\eta^i\wedge (\delta\omega)^{n-1-i}$ is non-negative for all $i =1, ..., \kappa$ and $(nc_\delta+\ve)(\delta\omega)^{n-1}$ is strictly positive. This immediately implies that  
$$(nc_\delta+\ve)\chi_\delta^{n-1}-(n-1)\eta\wedge \chi_\delta^{n-2}>0. $$ By Lemma \ref{lower}, we always have $\ve<\frac{n+1}{n}\alpha_X([\chi])$. The existence part of Theorem \ref{main1} immediately follows from  Theorem \ref{ChCh} and Lemma \ref{criterion}. The general uniqueness of cscK metrics is established in \cite{BB}.
\qed
\end{proof}

The proof of Theorem \ref{main2} is similar to that of Theorem \ref{main1}.
\begin{proof}[Proof of Theorem \ref{main2}]  We let $\beta$ be the pullback of a K\"ahler form in $[L]$ on the Riemann surface $Y$ and $\eta \in [K_X]$ a K\"ahler form on $X$. Then we define 
$$\chi_\delta =\beta+\delta\eta,~  c_\delta=\frac{[K_X]\cdot [\chi_\delta]^{n-1}}{[\chi_\delta]^n}$$ for sufficiently small $\delta>0$. 
Direct computation shows that
$$nc_\delta=\frac{n-1}{\delta}+\frac{K_X^n}{K_X^{n-1}\cdot L}+O(\delta).$$
Then we have
$$\Big( nc_\delta\chi_\delta-(n-1)\eta \Big)\wedge \chi_\delta^{n-2} =\left( nc_\delta \xi+ \left(\delta\frac{K_X^n}{K_X^{n-1}\cdot L}+O(\delta^2)\right)\eta\right)\wedge\chi_\delta^{n-2}>0.
$$
The theorem immediately follows by either applying the condition (\ref{sw}) or by  Lemma \ref{criterion} with $\epsilon=0$.
\qed
\end{proof}


\bigskip

\noindent {\bf{Acknowledgements:}} The work is carried out during the first and second authors' visit at Rutgers University. They would like to  thank  Department of Mathematics of Rutgers University for its hospitality.  The first author would like to thank his advisor Professor Gang Tian for his support and encouragement.

\bigskip

{\noindent \footnotesize $^*$ School of Mathematical Sciences\\
Peking University, Beijing, China 100871\\

{\noindent \footnotesize $^{**}$ Department of Mathematics\\
Nanjing University, Nanjing, China 210093\\

\noindent $\dagger$ Department of Mathematics\\
Rutgers University, Piscataway, NJ 08854\\

\end{document}